\numberwithin{equation}{section}
\numberwithin{figure}{section}
\theoremstyle{plain}
\newtheorem{thm}{\protect\theoremname}
\theoremstyle{plain}
\newtheorem{cor}[thm]{\protect\corollaryname}
\providecommand{\corollaryname}{Corollary}
\providecommand{\theoremname}{Theorem}
\begin{document}
\title{SETS WITH ARITHMETIC PROGRESSIONS ARE ABUNDANT}
\author{ANINDA CHAKRABORTY AND SAYAN GOSWAMI}

\email{anindachakraborty2@gmail.com}
\address{Government General Degree College at Chapra}
\email{sayan92m@gmail.com}
\address{Department of Mathematics,University of Kalyani}
\thanks{The second author of the paper is supported by UGC-JRF fellowship.}
\keywords{Arithmetic progressions, notion of largeness of sets}
\begin{abstract}
{\normalsize{}Furstenberg, Glasscock, Bergelson, Beiglboeck have been
studied abundance in arithmatic progression on various large sets
like piecewise syndetic, central, thick, etc. but also there are so
many sets in which abundance in progression is still unsettled like
J-sets, C-sets, D-sets etc. But all of these sets have a common property
that they contains arbitrary length of arithmatic progressions. These
type of sets are called sets of A.P. rich, we have given an elementary
proof of abundance of those sets.}{\normalsize\par}
\end{abstract}

\maketitle

\section{introduction}

One of the famous Ramsey theoretic results is so called Van der Waerden\textquoteright s
Theorem which guarantees that atleast one cell of any partition $\{C_{1},C_{2},\ldots,C_{r}\}$
of $\mathbb{N}$ contains arithmetic progressions of arbitrary length.
Since arithmetic progressions are invariant under shifts, it follows
that every piecewise syndetic set contains arbitrarily long arithmetic
progressions.
\begin{thm}
\label{Thm 1}Given any $r,l\in\mathbb{N}$, there exists $N(r,l)\in\mathbb{N}$,
such that for any $r-$partition of $[1,N]$, atleast one of the partition
contains an $l-$length arithmetic progressions.
\end{thm}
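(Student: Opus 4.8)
The plan is to run van der Waerden's classical double induction: an outer induction on the length $l$ in which the number of colours $r$ is kept as a free parameter allowed to grow from step to step, and an inner induction that pushes the length up by one. The outer base case is pigeonhole: $N(r,2)=r+1$, since among any $r+1$ integers two receive the same colour and any two integers form a $2$-term progression (while $l=1$ is vacuous). So assume the theorem for progressions of length $l$ and \emph{every} number of colours, fix $r$, and aim to produce $N(r,l+1)$.

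The mechanism is the notion of a \emph{colour-focused} family. For a fixed colouring, call length-$l$ progressions $A_i=\{a_i,a_i+d_i,\dots,a_i+(l-1)d_i\}$ ($i=1,\dots,k$) \emph{focused at $f$} if $a_i+ld_i=f$ for every $i$, and \emph{colour-focused} if moreover each $A_i$ is monochromatic and the colours of $A_1,\dots,A_k$ are pairwise distinct. I would prove, by induction on $k=1,\dots,r$: there is $M=M(r,l,k)$ such that every $r$-colouring of $[1,M]$ either contains a monochromatic progression of length $l+1$, or contains $k$ colour-focused progressions of length $l$ with common focus inside $[1,M]$. Granting this for $k=r$ finishes the theorem, because $r$ colour-focused progressions exhaust all $r$ colours, so the focus $f$ necessarily carries the colour of some $A_i$, and then $A_i\cup\{f\}$ is a monochromatic progression of length $l+1$; thus one sets $N(r,l+1):=M(r,l,r)$.

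For the inner claim, $k=1$ is just the outer hypothesis for length $l$ (a single colour-focused progression is merely a monochromatic one), with $M:=2N(r,l)$ so that the focus still fits. For $k\to k+1$, cut $[1,M]$ into consecutive blocks of length $n:=M(r,l,k)$. Inside each block the hypothesis for $k$ either finishes the proof or produces $k$ colour-focused progressions focused at a point $f_B$ of that block; if some $f_B$ repeats one of their colours we finish as above, so we may assume every block carries $k$ colour-focused progressions together with a focus of a further, $(k{+}1)$st colour, and we fix the configuration in a block to depend only on the block's colour pattern. Now colour the blocks by these patterns — at most $r^n$ of them — and apply the outer hypothesis for length $l$ to the first $N(r^n,l)$ blocks: this yields $l$ equally spaced blocks $B,B+s,\dots,B+(l-1)s$ with identical patterns, where $s<N(r^n,l)$, so taking at least $2N(r^n,l)$ blocks keeps the block $B+ls$ in range. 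Translate the configuration of $B$ along this block-progression while replacing each common difference $d_i$ by $d_i+sn$: the $t$-th term of the new progression is the $t$-th term of $A_i$ moved forward by $ts$ whole blocks, hence it sits in $B+ts$ at an unchanged position within its block and keeps the colour $\chi(A_i)$; and the foci $f_B,f_{B+s},\dots,f_{B+(l-1)s}$ themselves form a monochromatic progression of the $(k{+}1)$st colour. All $k+1$ of these progressions are focused at $f_B+lsn$ and have pairwise distinct colours, so $M(r,l,k+1):=2n\,N(r^n,l)$ works and the inner induction closes.

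The one genuinely substantive point — as opposed to the block-length bookkeeping, which only needs a factor-of-two safety margin — is the simultaneous translate-and-dilate step: one has to notice that increasing every common difference by the block width $sn$ (i) preserves monochromaticity, because each progression stays inside the blocks of the chosen monochromatic block-progression and at an unchanged position within its block, and (ii) forces all the modified progressions to acquire the \emph{same} new focus $f_B+lsn$; adjoining the progression formed by the old foci then furnishes exactly the extra colour class needed to go from $k$ to $k+1$. Everything else is the pigeonhole base case, the repeated reduction ``the focus repeats a colour, so we are done,'' and the two nested inductions.
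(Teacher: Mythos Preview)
Your argument is the standard colour-focusing proof of van der Waerden's theorem and it is carried out correctly: the base case, the inner induction on the number $k$ of colour-focused progressions, the block colouring by patterns in $r^n$ colours, and the translate-and-dilate step all check out, including the bookkeeping that keeps the new common focus $f_B+lsn$ inside the interval.

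There is, however, nothing to compare against: the paper does not prove this theorem. Theorem~\ref{Thm 1} is simply \emph{stated} in the introduction as the classical van der Waerden theorem and is used only as background motivation; the paper's own contributions begin with Theorem~\ref{Thm 2}, and no proof of Theorem~\ref{Thm 1} appears anywhere in the text. So your proposal is not an alternative to the paper's proof but rather a (correct) supplement supplying an argument the authors chose to omit.
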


In various times, mathematicians studied abundance in progression
in different types of large sets Like \textbf{syndetic sets, central
sets, thick sets, piecewise syndetic sets }etc. we have seen many
abundance results from \cite{key-4}, \cite{key-5}, \cite{key-6},
\cite{key-3} etc. All of this results shows that if $A\subseteq\mathbb{N}\text{ or \ensuremath{S}}$,
(Where S is any countable commutative semigroup) be large in some
sense then some special configuration contained in those sets are
also large in some sense. However, there also remains types of large
sets where abundance are yet to be explicate, like \textbf{C-sets,
D-sets, J-sets }etc.

All of these aforementioned sets have a common property: They all
contain arbitrary length of arithmatic progressions, this type of
sets are called \textbf{sets of A.P. rich}. Here we have given easiest
elementary combinatorial proof of abundance for these type of sets.
Also we have seen that if $A$ is a \textbf{set of A.P. rich}, then
it is \textbf{set of A.P. rich} of all order.

Throughout this paper, $S$ is a countable commutative semigroup.
Although sometimes countability or commutativity does not appear in
the proof.

\section{Main results}
\begin{thm}
\label{Thm 2} Let $S$ is any semigroup. Then If $A\subseteq S$
is a set of A.P. rich, then 
\[
B=\:\{(a,b):\:\left\{ a,\:a+b,\:a+2b,....,\:a+lb\right\} \subset A\}
\]
is a set of A.P. rich.
\end{thm}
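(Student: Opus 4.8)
The plan is to unwind what it means for $B\subseteq S\times S$ to be A.P. rich: for every $k\in\mathbb{N}$ we must produce $(a,b),(c,d)\in S\times S$, with $(c,d)$ a nondegenerate common difference, such that $(a,b)+j\,(c,d)\in B$ for all $j\in\{0,1,\ldots,k\}$. Writing out the defining condition of $B$ and using commutativity, $(a,b)+j(c,d)\in B$ says precisely that $a+ib+jc+ijd\in A$ for all $0\le i\le l$; so the full statement reduces to locating inside $A$ a two-parameter configuration
\[
\{\,a+ib+jc+ijd:\ 0\le i\le l,\ 0\le j\le k\,\}.
\]
The point is that such a configuration can be squeezed inside a single, sufficiently long, one-dimensional arithmetic progression contained in $A$ --- and one such progression is available precisely because $A$ is A.P. rich.

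Concretely, first I would fix $k$ and use the A.P. richness of $A$ to choose an arithmetic progression $P=\{\,x+my:\ 0\le m\le n-1\,\}\subseteq A$ with $y\neq 0$ and $n$ as large as convenient; $n=(2k+1)(l+1)$ is more than enough. Then set $a=x$, $b=y$, $c=(l+1)y$, $d=y$. With these choices $(a,b)+j(c,d)=\bigl(x+j(l+1)y,\ (j+1)y\bigr)$, and its membership in $B$ amounts to the assertion that
\[
x+\bigl(j(l+1)+i(j+1)\bigr)y\in A\qquad\text{for }0\le i\le l .
\]
For $0\le i\le l$ and $0\le j\le k$ the integer coefficient $j(l+1)+i(j+1)$ is nonnegative and at most $k(l+1)+l(k+1)=2kl+k+l\le n-1$, so each such element lies in $P\subseteq A$. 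Hence $(a,b)+j(c,d)\in B$ for every $j\le k$, and since $(c,d)=\bigl((l+1)y,\,y\bigr)\neq(0,0)$ this is a genuine $(k+1)$-term arithmetic progression inside $B$. As $k$ was arbitrary, $B$ is A.P. rich.

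The step needing the most care is the bookkeeping just above: one must pick the common difference $(c,d)$ so that the two-dimensional grid collapses onto a single honest one-dimensional progression of $A$. The device is to force every relevant coordinate to be an integer multiple of the difference $y$ of the progression supplied by $A$ --- which is why $b$, $c$, $d$ are all taken to be multiples of $y$ --- after which only the elementary estimate on the index $j(l+1)+i(j+1)$ remains. A second point worth stating explicitly is that the whole argument uses only repeated addition: the coefficient $0$ occurs solely in the term $x$ itself, while $b=y$, $c=(l+1)y$, $d=y$ are bona fide elements of $S$, so no identity or zero element of $S$ is ever needed and the proof is valid in an arbitrary commutative semigroup. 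Finally I would note that this is the engine behind the ``A.P. rich of all order'' remark: feeding $B\subseteq S\times S$ back into the same construction, in place of $A\subseteq S$, manufactures the higher-dimensional configurations.
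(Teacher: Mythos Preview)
Your argument is correct and follows essentially the same route as the paper: pull a single long arithmetic progression $x,x+y,\ldots$ out of $A$ and manufacture the desired progression in $B\subseteq S\times S$ entirely from integer multiples of $y$, then finish with a straightforward index bound. The only difference is cosmetic: the paper takes the common difference in $S\times S$ to be $(y,y)$ (so the $j$-th term is $(x+jy,(j+1)y)$ and the needed length in $A$ is $j+l(j+1)$), whereas you take $((l+1)y,\,y)$, which works equally well but forces a slightly longer progression in $A$; neither choice changes the idea.
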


\begin{proof}
Now as $A$ contains arithmatic progression of arbitrary length, fixed
$l\in\mathbb{N}$ and for any $1\leq r\leq l$, it must contains arithmatic
progression of length $r+(r+1)l$, which implies that $(c,d)+r(d,d)\in B$,
which shows that $B$ contains 
\[
\left\{ (c,d),\:(c,d)+(d,d),\:(c,d)+2(d,d),.....,\:(c,d)+l(d,d)\right\} .
\]
\end{proof}
This proves the theorem.

Suppose, for any $l\in\mathbb{N}$, $AP_{l+1}$ be the subsemigroup
of $S^{l+1}$ defined by:
\[
AP_{l+1=\:}\left\{ \left(a,\:a+b,\:a+2b,.....,\:a+lb\right):\:a,b\in S\right\} .
\]

Now we derive a result which is one of the main application of \cite{key-3}
for some large sets.
\begin{cor}
\label{Corollary 3} Let $S$ be a cancellative semigroup and $A\subseteq S$
is a set of A.P. rich, then $A^{l+1}\cap AP_{l+1}$ is also a set
of A.P. rich.
\end{cor}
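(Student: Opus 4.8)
The plan is to reduce the statement to Theorem \ref{Thm 2} by transporting arithmetic progressions along the natural parametrization of $AP_{l+1}$ by pairs. Theorem \ref{Thm 2} supplies the A.P. rich set $B=\{(a,b):\ \{a,\,a+b,\,a+2b,\dots,\,a+lb\}\subseteq A\}\subseteq S\times S$. First I would introduce the map
\[
\phi:S\times S\to S^{l+1},\qquad \phi(a,b)=(a,\,a+b,\,a+2b,\dots,\,a+lb),
\]
whose image is exactly $AP_{l+1}$. Since addition on $S^{l+1}$ is coordinatewise and each coordinate map $(a,b)\mapsto a+ib$ is a semigroup homomorphism $S\times S\to S$ (here commutativity of $S$ is used), $\phi$ is a semigroup homomorphism; and because $S$ is cancellative it is injective, so $\phi$ is an isomorphism of $S\times S$ onto the subsemigroup $AP_{l+1}$.

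Next I would record the set-theoretic identity $\phi(B)=A^{l+1}\cap AP_{l+1}$: a point of $AP_{l+1}$ has the form $(a,a+b,\dots,a+lb)$, and it lies in $A^{l+1}$ precisely when each of $a,a+b,\dots,a+lb$ belongs to $A$, that is, precisely when $(a,b)\in B$; conversely every element of $A^{l+1}\cap AP_{l+1}$ arises from such a pair. Hence it suffices to prove that $\phi(B)$ is A.P. rich in $S^{l+1}$.

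The final step uses the elementary fact that a semigroup homomorphism carries arithmetic progressions to arithmetic progressions: if $\{v,\,v+w,\,v+2w,\dots,\,v+kw\}\subseteq B$ for some $v,w\in S\times S$, then, since $\phi(v+jw)=\phi(v)+j\,\phi(w)$, the set $\{\phi(v),\,\phi(v)+\phi(w),\dots,\,\phi(v)+k\phi(w)\}$ is an arithmetic progression of the same length sitting inside $\phi(B)$. As Theorem \ref{Thm 2} provides arithmetic progressions of every length within $B$, the same holds for $\phi(B)=A^{l+1}\cap AP_{l+1}$, which is therefore A.P. rich.

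I do not expect a real obstacle here; once one sees that $AP_{l+1}$ is a homomorphic copy of $S\times S$ under which $B$ corresponds to $A^{l+1}\cap AP_{l+1}$, the proof is essentially bookkeeping. The two points deserving a line of care are checking that $\phi$ respects the operation coordinatewise, and pinning down where cancellativity enters: the forward implication needs only that $\phi$ is a homomorphism, and cancellativity is used solely to make $\phi$ injective, i.e.\ to guarantee that the progressions produced in $S^{l+1}$ are faithful copies of those coming from Theorem \ref{Thm 2}. (One may also bypass $\phi$ and argue directly: given $m$, choose an arithmetic progression $c,c+d,\dots,c+(m+l(m+1))d$ inside $A$, and observe that the points $\bigl(c+jd,\,c+jd+(j+1)d,\,\dots,\,c+jd+l(j+1)d\bigr)$ for $j=0,1,\dots,m$ all lie in $A^{l+1}\cap AP_{l+1}$ and form an arithmetic progression there.)
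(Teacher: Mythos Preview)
Your proof is correct and follows essentially the same route as the paper: introduce the homomorphism $\phi:S\times S\to AP_{l+1}$, invoke Theorem~\ref{Thm 2} to obtain that $B$ is A.P.\ rich, and transport progressions through $\phi$ into $A^{l+1}\cap AP_{l+1}$. Your version is in fact more careful than the paper's, since you explicitly verify $\phi(B)=A^{l+1}\cap AP_{l+1}$ and isolate where cancellativity is used (injectivity of $\phi$), points the paper leaves implicit.
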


\begin{proof}
Take the semigroup epimorphism, $\varphi:\:S\times S\longrightarrow AP_{l+1}$
defined by, $\varphi\left(a,b\right)=\:\left(a,\:a+b,\:a+2b,......,\:a+lb\right)$.
Then for any $A\subseteq S$ is A.P. rich, from \ref{Thm 2} $B$
is also a set of A.P. rich. So, for any $l-$length arithmatic progression
\[
\left\{ \left(a,b\right),\:\left(a,b\right)+\left(e,f\right),\:\left(a,b\right)+2\left(e,f\right),......,\:\left(a,b\right)+l\left(e,f\right)\right\} 
\]
 in $B$, we have 
\[
\varphi\left\{ \left(a,b\right)+i\left(e,f\right)\right\} =\:\varphi\left(a,b\right)+\:i\varphi\left(e,f\right)\in A^{l+1}\cap AP_{l+1}\text{ for all \ensuremath{1\leq i\leq l}}\text{.}
\]

Which concludes the result.
\end{proof}

\end{document}